\title[The Rokhlin property for endomorphisms]{The Rokhlin property for endomorphisms and strongly self-absorbing $C^*$-algebras}
\author{Jonathan Brown}
\address{Department of Mathematics, University of Dayton, 300 College Park, Dayton, OH 45469-2316, USA}
\email{jonathan.henry.brown@gmail.com}
\author{Ilan Hirshberg}
\address{Department of Mathematics, Ben Gurion University of the Negev, P.O.B. 653, Be'er Sheva 84105, Israel}
\email{ilan@math.bgu.ac.il}
\thanks{This research was supported by Israel Science Foundation grant 1471/07 and by the Center for Advanced Studies in Mathematics 
at Ben-Gurion University of the Negev.}
\subjclass[2010]{46L55, 46L35} 
\theoremstyle{plain}
\newtheorem{Thm}{Theorem}[section]
\newtheorem{Lemma}[Thm]{Lemma}
\newtheorem{Prop}[Thm]{Proposition}
\theoremstyle{definition}
\newtheorem{Def}[Thm]{Definition}
\newtheorem{Notation}[Thm]{Notation}
\newcommand{\B}{B}
\newcommand{\A}{A}
\newcommand{\D}{D}
\newcommand{\Zh}{\mathcal{Z}}
\newcommand{\E}{E}
\newcommand{\Oh}{\mathcal{O}}
\newcommand{\N}{{\mathbb N}}
\newcommand{\Z}{{\mathbb Z}}
\newcommand{\aut}{\mathrm{Aut}}
\newcommand{\eps}{\varepsilon}
\numberwithin{equation}{section}
\newcommand{\Alim}{\underrightarrow{\A}}
\newcommand{\alphalim}{\underrightarrow{\alpha}}
\begin{document}
\begin{abstract}
In this paper we define a Rokhlin property for automorphisms of non-unital $C^*$-algebras and for endomorphisms. We show that the crossed product of a $C^*$-algebra by a Rokhlin automorphism preserves  absorption of a strongly self-absorbing $C^*$-algebra, and use this result to deduce that the same result holds for crossed products by endomorphisms in the sense of Stacey. This generalizes earlier results of the second named author and W.~ Winter.
\end{abstract}
\maketitle

The goal of this paper is to generalize results from \cite{HW} to the cases of automorphisms of non-unital $C^*$-algebras and of crossed products by endomorphisms in the sense of Stacey. It was shown in \cite{HW} that given a Rokhlin automorphism $\alpha$ of a unital $C^*$-algebra $A$, if $A$ absorbs a
strongly self-absorbing $C^*$-algebra $D$ then the crossed product $A\rtimes_\alpha \mathbb{Z}$ also
absorbs $D$.  Strongly self-absorbing $C^*$-algebras are those infinite dimensional separable unital
$C^*$-algebras $D$  such that the map $d\mapsto d\otimes 1$ from $D$ to $D\otimes D$ is approximately
unitarily equivalent to an isomorphism. We refer the reader to \cite{TW} for the basic theory of
strongly self-absorbing $C^*$-algebras. The known examples of strongly self-absorbing
$C^*$-algebras are the Jiang-Su algebra $\mathcal{Z}$, the Cuntz algebras $\mathcal{O}_2$ and
$\mathcal{O}_\infty$, UHF algebras of  infinite type, and tensor products of
$\mathcal{O}_\infty$ by such UHF-algebras. We say that $A$ is $D$-absorbing if $A \cong A \otimes D$. The property of $D$-absorption for various strongly self-absorbing $C^*$-algebras
$D$ plays an important role in structure and classification theory of $C^*$-algebras. For
Kirchberg algebras $A$, the fundamental theorems in \cite{kirchberg-phillips} state that $A \otimes \Oh_{\infty} \cong A$ and $A \otimes \Oh_2 \cong \Oh_2$, and those are used in the Kirchberg-Phillips classification (\cite{phillips-classification}). 
In the stably finite case, $D$-absorption is central to Winter's classification theorem (\cite{winter-localizing}). We refer the reader to \cite{Ror91, Ror92} and \cite{GJS00, Ror04} for regularity properties of UHF-absorbing and $\Zh$-absorbing $C^*$-algebras, respectively, and to \cite{elliott-toms} for a survey of regularity properties in the context of classification of $C^*$-algebras.

In \cite{Cuntz}, Cuntz  introduced a notion of crossed product by an endomorphism to aid in the study of the Cuntz algebras. In particular he shows that the Cuntz algebras are crossed products of UHF algebras by an endomorphism.  This crossed product was later developed by Paschke \cite{Pas80} and Stacey \cite{Sta93}. 
Stacey showed that crossed products by endomorphisms are universal for certain representations and (following Cuntz) that they can be realized as a corner of a crossed product by an automorphism.  

In this note we shall first define a suitable version of the Rokhlin property for automorphisms of non-unital $C^*$-algebras (Definition~\ref{nonunital Rokhlin}) and show that if $\alpha$ is a Rokhlin automorphism of a $C^*$-algebra $A$ and $A$ absorbs a strongly self-absorbing $C^*$-algebra  $D$ then so does $A\rtimes_\alpha \Z$ (Theorem~\ref{Thm:permanence-auto}). We then define a Rokhlin property for endomorphisms of $C^*$-algebras (Definition~\ref{Def:Rokhlin-endo}) and use Stacey's construction and Theorem~\ref{Thm:permanence-auto} to prove that crossed products by Rokhlin endomorphisms also preserve $D$-absorption (Proposition~\ref{Prop:endo}).

\section{Automorphisms - the non-unital case}

\begin{Notation}
Let $\A$ be a $C^*$-algebra. As usual, we denote $\A_{\infty} = \ell^{\infty}(\N,\A)/c_0(\N,\A)$, and we identify $\A$ with the image of the subalgebra of $\ell^{\infty}(\N,\A)$ consisting of the constant sequences in the quotient. If $G$ is a discrete group and $\alpha:G \to \aut(\A)$ is an action, then we have a naturally induced action of $G$ on $\ell^{\infty}(\N,\A)$, which descends to an action on $\A_{\infty}$ leaving the central sequence algebra $\A_{\infty} \cap \A'$ invariant. We denote those induced actions by $\bar{\alpha}$. The case of interest in this paper will be of $G = \Z$, in which case we denote throughout $\alpha = \alpha_1$. 
\end{Notation}

\begin{Def} \label{nonunital Rokhlin}
Let $\A$ be a $C^*$-algebra, and $\alpha \in \aut(\A)$. We say that $\alpha$ has the \emph{Rokhlin property} if for any positive integer $p$, any finite set $F \subset \A$,  and any $\eps>0$, there are mutually orthogonal positive contractions  
$e_{0,0},\ldots,e_{0,p-1}, e_{1,0},\ldots,e_{1,p} \in \A$ such that
\begin{enumerate}
\item $\left\|\left ( \displaystyle \sum_{r=0}^1\sum_{j=0}^{p-1+r} e_{r,j} \right ) a - a \right\|<\eps$ for all $a \in F$,
\item $\left\|[e_{r,j},a]\right\| < \eps$ for all $r,j$ and $a \in F$,
\item $\left\|\alpha(e_{r,j})a - e_{r,j+1}a\right\|<\eps$ for all $a\in F$, $r=0,1$ and  $j=0,1,\ldots,p-2+r$, and
\item  $\left\|\alpha(e_{0,p-1} + e_{1,p})a - (e_{0,0} + e_{1,0})a\right\|<\eps$ for all $a \in F$.
\end{enumerate} 
\end{Def}
This will be referred to as \emph{Rokhlin dimension zero} in \cite{HP}, generalizing the definitions from \cite{HWZ}.

We note that although the elements $e_{r,j}$ are not required to be projections, they behave almost like projections when multiplied by elements from $a$: 
$$
\|(e_{r,j}^2 - e_{r,j})a\|<\eps .
$$
This follows immediately from the fact that $\displaystyle e_{r,j}^2  = e_{r,j} \sum_{k=0}^1\sum_{j=0}^{p-1+k} e_{k,j}$.

We can reformulate this property in terms of the central sequence algebra as well. The proof is straightforward and will be omitted.

\begin{Lemma}
\label{Lemma:central-sequence-reformulation}
Let $\A$ a separable $C^*$-algebra and $\alpha \in \aut (\A)$.
Then $\alpha$ has the Rokhlin property if and only if for any positive integer $p$ and any separable self-adjoint subspace $E \subseteq \A_{\infty}$ there are mutually orthogonal positive contractions 
$e_{0,0},\ldots,e_{0,p-1}, e_{1,0},\ldots,e_{1,p}$
in $ \A_{\infty} \cap (\A + E)'$
such that
\begin{enumerate}
\item $\left ( \displaystyle \sum_{r=0}^1\sum_{j=0}^{p-1+r} e_{r,j} \right ) a = a$ for all $a \in \A + E$,
\item $\bar{\alpha}(e_{r,j})a = e_{r,j+1}a$, for all $r,j$ for all $j=0,1,\ldots,p-2+r$, and all  $a \in \A + E$,
\item  $\bar{\alpha}(e_{0,p-1} + e_{1,p})a = (e_{0,0} + e_{1,0})a$ for all $a \in \A + E$.
\end{enumerate} 
\end{Lemma}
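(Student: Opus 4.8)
The plan is to prove both implications by the standard reindexation (diagonal‑sequence) argument relating the ``local'' formulation of Definition~\ref{nonunital Rokhlin} (with tolerance $\eps$ and finite set $F$) to the central‑sequence formulation; everything is bookkeeping except for one lifting step, which I flag as the crux.

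For the ``only if'' direction I would fix $p$ and a separable self‑adjoint subspace $E\subseteq\A_\infty$, note that $\A+E$ is separable since $\A$ is, and choose a sequence $(x^{(k)})_{k}$ dense in $\A+E$ together with lifts $(x^{(k)}_n)_n\in\ell^\infty(\N,\A)$. For each $n$ I would apply Definition~\ref{nonunital Rokhlin} to the integer $p$, the finite set $F_n=\{\,x^{(k)}_n : 1\le k\le n\,\}\subseteq\A$ and $\eps=1/n$, obtaining mutually orthogonal positive contractions $e^{(n)}_{0,0},\dots,e^{(n)}_{0,p-1},e^{(n)}_{1,0},\dots,e^{(n)}_{1,p}\in\A$ satisfying (1)--(4). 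Set $e_{r,j}\in\A_\infty$ to be the class of $(e^{(n)}_{r,j})_n$; these are again mutually orthogonal positive contractions. Condition (2), together with $1/n\to 0$ and $x^{(k)}_n\in F_n$ for $n\ge k$, forces $[e_{r,j},x^{(k)}]=0$ in $\A_\infty$ for all $k$, hence (by density and $\|e_{r,j}\|\le1$) $e_{r,j}\in\A_\infty\cap(\A+E)'$; likewise conditions (1), (3) and (4) yield, first on $\{x^{(k)}\}$ and then by continuity on all of $\A+E$, the exact relations (1), (2) and (3) of the lemma---recalling that $\bar\alpha$ is coordinatewise, so that $\bar\alpha(e_{r,j})$ is represented by $(\alpha(e^{(n)}_{r,j}))_n$.

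For the converse I would fix $p$, a finite $F\subseteq\A$ and $\eps>0$, apply the central‑sequence conditions with this $p$ and $E=\{0\}$ (so the relations hold against every $a\in\A\supseteq F$), and obtain mutually orthogonal positive contractions $e_{r,j}\in\A_\infty\cap\A'$ satisfying (1)--(3). The one substantial point---and the step I expect to be the main obstacle---is to lift the family $(e_{r,j})$ through the quotient map $\ell^\infty(\N,\A)\to\A_\infty$ to a family of \emph{exactly} mutually orthogonal positive contractions in $\ell^\infty(\N,\A)$, rather than merely approximately orthogonal ones; this is legitimate because the universal $C^*$‑algebra on $2p+1$ mutually orthogonal positive contractions, namely $\bigoplus_{i=1}^{2p+1}C_0((0,1])$, is projective, a standard fact about $C^*$‑algebra lifting problems (cf.\ the analogous reductions in \cite{HWZ}). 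Writing such a lift coordinatewise gives, for each $n$, mutually orthogonal positive contractions $e^{(n)}_{r,j}\in\A$ with $(e^{(n)}_{r,j})_n$ representing $e_{r,j}$; since the quotient map and $\bar\alpha$ act coordinatewise, relations (1)--(3) of the lemma together with $e_{r,j}\in\A'$ say precisely that the four quantities $\bigl\|(\sum_{r,j}e^{(n)}_{r,j})a-a\bigr\|$, $\bigl\|[e^{(n)}_{r,j},a]\bigr\|$, $\bigl\|\alpha(e^{(n)}_{r,j})a-e^{(n)}_{r,j+1}a\bigr\|$ and $\bigl\|\alpha(e^{(n)}_{0,p-1}+e^{(n)}_{1,p})a-(e^{(n)}_{0,0}+e^{(n)}_{1,0})a\bigr\|$ all tend to $0$ as $n\to\infty$, for each of the finitely many $a\in F$. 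Taking $n$ large then witnesses the Rokhlin property for $p$, $F$ and $\eps$, which finishes the proof.
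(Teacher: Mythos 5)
Your argument is correct, and it is precisely the standard diagonalization-plus-lifting argument the authors had in mind when they declared the proof ``straightforward'' and omitted it: the forward direction is the usual reindexation over a dense sequence in $\A+E$ with tolerances $1/n$, and the converse correctly identifies the only nontrivial point, namely lifting the $2p+1$ mutually orthogonal positive contractions exactly, which is justified by the projectivity of the cone over $\C^{2p+1}$ (Loring). No gaps.
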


We recall that it was shown in \cite{winter-ssa-Z-stable} that any strongly self-absorbing $C^*$-algebra is $K_1$-injective. This was previously required as an extra assumption in some of the theorems we refer to. In particular, it follows from \cite[Proposition 1.13]{TW} that one can choose the unitaries implementing the approximately inner half-flip of $\D \otimes D$ to be in $U_0(\D \otimes \D)$, and we shall use this fact below in the proof of Theorem \ref{Thm:permanence-auto}.

\begin{Def}
\label{Def-morphism}
Let $\A$ be a separable $C^*$-algebra, let $\B$ be a unital $C^*$-algebra, and let $E \subset \A_{\infty}$ be a self-adjoint separable subspace. A c.p.c.~ map $\varphi: \B \to \A_{\infty}$ will be called a \emph{morphism relative to $E$} if $\varphi(\B) \subseteq \A_{\infty} \cap E'$ and 
\begin{enumerate}
\item $(\varphi(xy) -\varphi(x)\varphi(y))e = 0$ for all $e \in \E$ and all $x,y \in \B$, and
\item $\varphi(1)e = e$ for all $e \in E$.
\end{enumerate}
\end{Def}
The two numbered conditions in this definition can be reformulated as saying that the composition of $\varphi$ with the quotient map $\A_{\infty} \to \A_{\infty}/\mathrm{Ann}(E)$ is a unital homomorphism. The algebra $(\A_{\infty} \cap \A')/\mathrm{Ann}(\A)$ was considered in \cite{kirchberg} as a substitute for $\A_{\infty} \cap \A'$ when $\A$ is non-unital, but we shall not use this formalism here. Using Definition \ref{Def-morphism}, we now rephrase the part of \cite[Proposition 4.1]{HRW} which is relevant to us as follows.
\begin{Prop}
\label{Prop:HRW-4.1-variant}
Let $\A$ be a separable $C^*$-algebra and let $\D$ be a strongly self-absorbing $C^*$-algebra. $\A$ is $\D$-absorbing if and only if there is a morphism from $\D$ to $\A_{\infty}$ relative to $\A$.
\end{Prop}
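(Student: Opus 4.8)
The plan is to recognize Proposition~\ref{Prop:HRW-4.1-variant} as a reformulation of the relevant part of \cite[Proposition 4.1]{HRW} and to supply the dictionary between the two formulations. Recall that \cite[Proposition 4.1]{HRW} characterizes $\D$-absorption of a separable $C^*$-algebra $\A$ in terms of the existence of a sequence of c.p.c.\ maps $\varphi_n\colon\D\to\A$ which is asymptotically central for $\A$, asymptotically multiplicative against $\A$, and asymptotically unital against $\A$; that is, $\|[\varphi_n(x),a]\|\to 0$, $\|(\varphi_n(xy)-\varphi_n(x)\varphi_n(y))a\|\to 0$ and $\|\varphi_n(1_\D)a-a\|\to 0$ for all $x,y\in\D$ and $a\in\A$ (any $K_1$-injectivity hypothesis accompanying that statement being automatic by \cite{winter-ssa-Z-stable}, as recalled above). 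The key observation is that packaging such a sequence into a single map $\varphi\colon\D\to\A_\infty$ yields exactly a morphism from $\D$ to $\A_\infty$ relative to $\A$ in the sense of Definition~\ref{Def-morphism}: asymptotic centrality is the condition $\varphi(\D)\subseteq\A_\infty\cap\A'$, asymptotic multiplicativity against $\A$ is condition (1) with $E=\A$, and asymptotic unitality against $\A$ is condition (2) with $E=\A$ (equivalently, $\varphi$ followed by the quotient map $\A_\infty\to\A_\infty/\mathrm{Ann}(\A)$ is a unital $*$-homomorphism).

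For the implication ``$\D$-absorbing $\Rightarrow$ there is a morphism'', I would take the sequence $(\varphi_n)$ furnished by \cite[Proposition 4.1]{HRW} and let $\varphi\colon\D\to\A_\infty$ be the induced map, which is c.p.c.\ because it factors through a c.p.c.\ map $\D\to\ell^\infty(\N,\A)$ followed by the quotient map onto $\A_\infty$. The three asymptotic conditions then become the defining (exact) conditions of a morphism relative to $\A$, so $\varphi$ is the desired morphism.

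For the converse, let $\varphi\colon\D\to\A_\infty$ be a morphism relative to $\A$. The only substantive step is to produce from $\varphi$ a sequence of c.p.c.\ maps $\D\to\A$ to feed into \cite[Proposition 4.1]{HRW}: since $\D$ is separable and nuclear (every strongly self-absorbing $C^*$-algebra is nuclear, see \cite{TW}), the Choi--Effros lifting theorem applied to the quotient map $\ell^\infty(\N,\A)\to\A_\infty$ provides a c.p.c.\ lift $\Phi\colon\D\to\ell^\infty(\N,\A)$ of $\varphi$, and I would take $\varphi_n$ to be the $n$-th component of $\Phi$. Because $\varphi(\D)\subseteq\A_\infty\cap\A'$ and conditions (1),(2) of Definition~\ref{Def-morphism} hold with $E=\A$, the sequence $(\varphi_n)$ has the three asymptotic properties above, and \cite[Proposition 4.1]{HRW} then yields that $\A$ is $\D$-absorbing. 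Beyond \cite[Proposition 4.1]{HRW} itself there is essentially no obstacle here: the only points requiring care are this (routine) lifting step and the non-unital bookkeeping needed to match the ``modulo $\mathrm{Ann}(\A)$'' formulation of conditions (1),(2) with the asymptotic conditions of \cite{HRW} — in particular, that one must not ask for norm-asymptotic multiplicativity or unitality, which would already fail for non-unital or stable $\A$. If \cite[Proposition 4.1]{HRW} happens to be phrased directly in terms of a map into $\A_\infty$, then only the change of terminology is needed.
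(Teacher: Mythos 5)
Your proposal is correct and matches the paper's intent exactly: the paper gives no proof, presenting the proposition as a direct rephrasing of \cite[Proposition 4.1]{HRW}, and your dictionary between sequences of c.p.c.\ maps and a single morphism relative to $\A$ (with the Choi--Effros lift of $\varphi$ through $\ell^\infty(\N,\A)\to\A_\infty$ for the converse) is the standard justification of that rephrasing.
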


We record the following simple lemma. The proof is a straight-forward diagonalization argument, which we omit (see \cite[Lemma 4.5]{HW} for a very similar statement and proof).

\begin{Lemma}
Let $\A,\B$ be separable $C^*$-algebras, with $\B$ unital. Suppose that there exists a morphism from $\B$ to $\A_{\infty}$ relative to $\A$, then for any separable self-adjoint subspace $E \subseteq \A_{\infty}$ there is a  morphism from $\B$ to $\A_{\infty}$ relative to $\A + E$.
\end{Lemma}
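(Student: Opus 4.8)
The plan is to produce the required morphism as $\psi=\rho_f\circ\varphi$, where $\varphi\colon\B\to\A_\infty$ is the given morphism relative to $\A$ and $\rho_f\colon\A_\infty\to\A_\infty$ is the $*$-endomorphism induced by reindexing along a strictly increasing function $f\colon\N\to\N$: concretely, $\rho_f$ sends the class of $(z^{(k)})_k\in\ell^\infty(\N,\A)$ to the class of $(z^{(f(k))})_k$, which is well defined on $\A_\infty$ because $f(k)\to\infty$. Since $\rho_f$ restricts to the identity on $\A$ (viewed as constant sequences), for \emph{any} choice of $f$ the composite $\psi=\rho_f\circ\varphi$ is c.p.c., has $\psi(\B)\subseteq\rho_f(\A_\infty\cap\A')\subseteq\A_\infty\cap\A'$, and satisfies $(\psi(xy)-\psi(x)\psi(y))a=\rho_f\big((\varphi(xy)-\varphi(x)\varphi(y))a\big)=0$ and $\psi(1)a=\rho_f(\varphi(1)a)=a$ for all $a\in\A$ and $x,y\in\B$. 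So the only thing that remains to be arranged by a judicious choice of $f$ is that $\psi$ also commute with $E$ and satisfy the multiplicativity and unitality relations against the elements of $E$.

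To this end I would fix a countable dense unital $*$-subalgebra $\{b_1,b_2,\dots\}$ of $\B$, a countable dense subset $\{e_1,e_2,\dots\}$ of $E$, and, for each element of the subalgebra, a representative $(x_i^{(k)})_k\in\ell^\infty(\N,\A)$ of its image under $\varphi$ and a representative $(e_n^{(k)})_k$ of $e_n$. The key observation is that for each fixed coordinate $k$ the elements $e_1^{(k)},\dots,e_k^{(k)}$ lie in $\A$, so — because $\varphi(b)$ commutes with $\A$ and $\varphi$ is multiplicative and unital modulo $\mathrm{Ann}(\A)$ — the quantities $\|[x_i^{(j)},e_n^{(k)}]\|$, $\|(x_{i}^{(j)}x_{i'}^{(j)}-x_{i''}^{(j)})e_n^{(k)}\|$ (for $b_{i''}=b_ib_{i'}$), and $\|x_{i_0}^{(j)}e_n^{(k)}-e_n^{(k)}\|$ (with $b_{i_0}=1$) all tend to $0$ as $j\to\infty$. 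Hence one can choose $f(k)>f(k-1)$ inductively, so large that all these quantities are smaller than $1/k$ at $j=f(k)$, as $i,i'$ range over $\{1,\dots,k\}$ (and $i''$ over the finitely many indices of the products $b_ib_{i'}$) and $n$ over $\{1,\dots,k\}$.

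It then remains to check that $\psi=\rho_f\circ\varphi$ with this $f$ is a morphism relative to $\A+E$. Since $(\A+E)'=\A'\cap E'$ and $\psi(\B)\subseteq\A_\infty\cap\A'$ was noted above, one only needs $[\psi(b),e]=0$ for all $b\in\B$, $e\in E$; for $b=b_i$ and $e=e_n$ this holds because $[\psi(b_i),e_n]$ is represented by $([x_i^{(f(k))},e_n^{(k)}])_k$, whose norms tend to $0$ by construction, and the general case follows by density and continuity of $(b,e)\mapsto[\psi(b),e]$. The relations against $\A$ were dealt with in the first paragraph; the relations against $E$ follow, for $x=b_i$, $y=b_{i'}$, $e=e_n$, from the estimates just built in, and then for all $x,y,e$ by continuity together with linearity in the $E$-variable. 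This yields the required morphism.

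The step to pause on is the initial realization that reindexing suffices: the hypothesis gives no information about how $\varphi(\B)$ interacts with $E$, yet this interaction is repaired purely by a reindexing, because after reindexing the $k$-th coordinate of each $e_n$ is still just an element of $\A$, with which $\varphi(\B)$ was assumed to commute (and be multiplicative against) in $\A_\infty$. Once that is recognized, everything else is the routine diagonalization referred to in the statement; the only additional care needed is the bookkeeping splitting each of the three defining conditions into its $\A$-part (automatic for every reindexing) and its $E$-part (what the choice of $f$ delivers).
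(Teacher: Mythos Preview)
Your proposal is correct and is precisely the diagonalization argument the paper has in mind (the paper omits the proof, citing \cite[Lemma~4.5]{HW}): the reindexing endomorphism $\rho_f$ is exactly the standard device for carrying out such a diagonal argument in $\A_\infty$, and your bookkeeping for the three conditions is accurate. The only cosmetic point is that you could streamline the passage from the dense sets to all of $\B$ and $E$ by noting once that $\rho_f$ is a $*$-homomorphism fixing $\A$, so continuity in all variables is immediate.
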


We shall require the following lemma, which is a variant of \cite[Lemma 2.3]{HW} better suited for our needs.
\begin{Lemma}
\label{Lemma:before-HW-2.4-variant}
Let $\A$ be a separable $C^*$-algebra. Let $G$ be a discrete countable group, and let $\alpha:G \to \aut(\A)$ be an action. Let $\D$ be a strongly self-absorbing $C^*$-algebra. Suppose that there is a morphism $\varphi$ from $\D$ to $\A_{\infty}$ relative to $\A$ that furthermore satisfies $\bar{\alpha}_g(\varphi(x))a = \varphi(x)a $ for all $a \in \A$ and all $g \in G$, then the full crossed product $A \rtimes_{\alpha}G$ is $\D$-absorbing (and hence all other crossed products).
\end{Lemma}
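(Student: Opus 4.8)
The plan is to apply Proposition~\ref{Prop:HRW-4.1-variant}: since $\A$ is separable and $G$ is countable, $\A\rtimes_\alpha G$ is separable, so it is $\D$-absorbing as soon as we produce a morphism from $\D$ to $(\A\rtimes_\alpha G)_\infty$ relative to $\A\rtimes_\alpha G$. The candidate is the given $\varphi$ pushed forward along the inclusion $\A\hookrightarrow\A\rtimes_\alpha G$: this inclusion induces a $*$-homomorphism $\iota\colon\A_\infty\to(\A\rtimes_\alpha G)_\infty$ (it carries $\ell^\infty(\N,\A)$ into $\ell^\infty(\N,\A\rtimes_\alpha G)$ and $c_0(\N,\A)$ into $c_0(\N,\A\rtimes_\alpha G)$, and is injective because the crossed-product norm restricts to the original norm of $\A$), and we set $\tilde\varphi:=\iota\circ\varphi$. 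This is c.p.c., being the composition of a c.p.c.~map with an isometric $*$-homomorphism.

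The next step is to verify the defining conditions of a morphism relative to $\A\rtimes_\alpha G$, which I would do on the dense $*$-subalgebra spanned by the elements $au_g$ ($a\in\A$, $g\in G$), where the $u_g\in M(\A\rtimes_\alpha G)$ are the canonical implementing unitaries; these act as multipliers on the sequence algebra $(\A\rtimes_\alpha G)_\infty$ and satisfy $u_gcu_g^{*}=\alpha_g(c)$ for $c\in\A$, hence $u_g\,\iota(y)\,u_g^{*}=\iota(\bar\alpha_g(y))$ for $y\in\A_\infty$. Conditions (1) and (2) of Definition~\ref{Def-morphism} pass through immediately: the identities $(\varphi(xy)-\varphi(x)\varphi(y))a=0$ and $\varphi(1)a=a$ in $\A_\infty$, after applying $\iota$ and multiplying on the right by $u_g$, give $(\tilde\varphi(xy)-\tilde\varphi(x)\tilde\varphi(y))(au_g)=0$ and $\tilde\varphi(1)(au_g)=au_g$, which then hold for all of $\A\rtimes_\alpha G$ in place of $au_g$ by linearity and continuity.

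The remaining condition, $\tilde\varphi(\D)\subseteq(\A\rtimes_\alpha G)_\infty\cap(\A\rtimes_\alpha G)'$, is the only place the extra equivariance hypothesis enters, and the only mildly delicate point. Commutation of $\tilde\varphi(\D)$ with $\A$ is inherited from $\varphi(\D)\subseteq\A_\infty\cap\A'$, so the content is commutation with the $u_g$'s. A short computation rewriting $(au_g)\tilde\varphi(x)=a\,(u_g\tilde\varphi(x)u_g^{*})\,u_g=\iota\big(a\,\bar\alpha_g(\varphi(x))\big)u_g$ and $\tilde\varphi(x)(au_g)=\iota\big(a\,\varphi(x)\big)u_g$ shows that the commutator $[\tilde\varphi(x),au_g]$ equals $\iota\big(a(\varphi(x)-\bar\alpha_g(\varphi(x)))\big)u_g$, so what is needed is the identity $a(\varphi(x)-\bar\alpha_g(\varphi(x)))=0$ in $\A_\infty$ for every $a\in\A$, $g\in G$, $x\in\D$. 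The hypothesis supplies the analogous identity with $a$ on the \emph{right}; since $\varphi$ and every $\bar\alpha_g$ are $*$-preserving and $\A$, $\D$ are closed under adjoints, taking adjoints converts it into the version with $a$ on the left, which is exactly what is required. With all conditions checked on the dense subalgebra, and hence everywhere, Proposition~\ref{Prop:HRW-4.1-variant} yields that $\A\rtimes_\alpha G$ is $\D$-absorbing. Finally, nothing in this argument is sensitive to the choice of (full) crossed-product norm: it applies verbatim to any $C^*$-completion of $C_c(G,\A)$ into which $\A$ embeds and on which the $u_g$ implement $\alpha$, so every crossed product is $\D$-absorbing.
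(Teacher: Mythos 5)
Your proposal is correct and takes essentially the same approach as the paper: push $\varphi$ forward along the induced inclusion $\A_{\infty} \hookrightarrow (\A\rtimes_{\alpha}G)_{\infty}$, verify on the spanning elements $au_g$ that it is a morphism relative to $\A\rtimes_{\alpha}G$ using the equivariance hypothesis, and apply Proposition~\ref{Prop:HRW-4.1-variant}. The only (cosmetic) difference is that you resolve the left/right placement of $a$ by taking adjoints, while the paper uses that $\bar{\alpha}_g(\varphi(x))$ still commutes with $\A$.
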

\begin{proof}
The canonical inclusion $\A \hookrightarrow \A\rtimes_{\alpha}G$ induces an inclusion $\A_{\infty} \hookrightarrow (\A\rtimes_{\alpha}G)_{\infty}$. Pick a map $\varphi$ as in the statement. Composing with this canonical inclusion (and retaining the same notation), we can view $\varphi$ as a morphism from $\D$ to $(\A\rtimes_{\alpha}G)_{\infty}$ relative to $\A$. However, if $u_g$ is one of the canonical unitaries in $M(\A\rtimes_{\alpha}G)$, $a \in \A$ and $x \in \D$ then we have that 
$$
a u_g \varphi(x) = a \bar{\alpha}_g(\varphi(x)) u_g = a \varphi(x) u_g = \varphi(x) a u_g
$$
and since the elements of the form $a u_g$ span $\A \rtimes_{\alpha}G$, we find that $\varphi$ is a morphism relative to $\A \rtimes_{\alpha}G$. Thus, by Proposition \ref{Prop:HRW-4.1-variant}, $\A \rtimes_{\alpha}G$ is $\D$-absorbing.
\end{proof}

We shall also require the following variant of \cite[Lemma 2.4]{HW}. The idea of the proof is to show that the conditions in this lemma imply the conditions of Lemma \ref{Lemma:before-HW-2.4-variant}. The proof is a straightforward modification of the argument in \cite{HW} and will be omitted.
\begin{Lemma}
\label{Lemma:HW-2.4-variant}
Let $\A$ be a separable $C^*$-algebra. Let $G$ be a discrete countable group. Let $S \subseteq G$ be a generating subset. Let $\alpha:G \to \aut(\A)$ be an action. Let $\D$ be a strongly self-absorbing $C^*$-algebra. Suppose that for any $\eps>0$ and any finite subset $G_0 \subseteq S$, any finite subset $A_0 \in \A$ and any finite subset $D_0 \in \D$ of elements in the unit ball there is a c.p.c.~ map $\varphi$ from $\D$ to $\A_{\infty}$ which satisfies 
\begin{enumerate}
\item $\|(\varphi(xy) - \varphi(x)\varphi(y))a\| < \eps$ for all $x,y \in D_0$, $a \in A_0$,
\item $\|\varphi(1)a - a\|<\eps$ for all $a \in A_0$,
\item $\|[\varphi(x),a]\|<\eps$ for all $x \in D_0$, $a \in A_0$, and
\item $\|(\bar{\alpha}_g(\varphi(x)) - \varphi(x))a\|<\eps$ for all $x \in D_0$, $a \in A_0$, and all $g \in G_0$,
\end{enumerate}
then the full crossed product $A \rtimes_{\alpha}G$ is $\D$-absorbing (and hence all other crossed products).
\end{Lemma}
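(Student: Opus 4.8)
The plan is to run a diagonal sequence argument that turns the family of approximate maps provided by the hypothesis into a single morphism satisfying the exact requirements of Lemma~\ref{Lemma:before-HW-2.4-variant}, and then to quote that lemma. Since $\A$ and $\D$ are separable and $S$ is countable, I would first fix increasing sequences of finite subsets $A_1 \subseteq A_2 \subseteq \cdots$ of $\A$ with dense union, $D_1 \subseteq D_2 \subseteq \cdots$ of the unit ball of $\D$ with dense union, and $G_1 \subseteq G_2 \subseteq \cdots \subseteq S$ with $\bigcup_n G_n = S$. Applying the hypothesis with $\eps = 1/n$ and the data $(G_n,A_n,D_n)$ yields, for each $n$, a c.p.c.~map $\varphi_n : \D \to \A_{\infty}$ satisfying conditions (1)--(4) for that data.

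Next I would replace each $\varphi_n$ by a genuine $\A$-valued approximation. As $\D$ is a strongly self-absorbing $C^*$-algebra it is nuclear, so by the Choi--Effros lifting theorem $\varphi_n$ lifts to a c.p.c.~map $\D \to \ell^{\infty}(\N,\A)$, that is, to a sequence $(\psi_n^{(k)})_{k \in \N}$ of c.p.c.~maps $\psi_n^{(k)} : \D \to \A$ with $\varphi_n(x) = [(\psi_n^{(k)}(x))_k]$ for every $x \in \D$. Because the norm on $\A_{\infty} = \ell^{\infty}(\N,\A)/c_0(\N,\A)$ is computed as a $\limsup$ and each of (1)--(4) involves only finitely many elements, there is an index $k(n)$ for which $\psi_n := \psi_n^{(k(n))} : \D \to \A$ satisfies the $\A$-norm versions of (1)--(4) with $\eps = 1/n$; here one uses that $\bar{\alpha}_g$ acts on $\A_{\infty}$ coordinatewise through $\alpha_g$, so that $\bar{\alpha}_g(\varphi_n(x))$ is represented by $(\alpha_g(\psi_n^{(k)}(x)))_k$. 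Now set $\varphi(x) = [(\psi_n(x))_n] \in \A_{\infty}$; this is again a c.p.c.~map.

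It remains to check that $\varphi$ satisfies the hypotheses of Lemma~\ref{Lemma:before-HW-2.4-variant}. For $a$ in $\bigcup_n A_n$ and $x,y$ in $\bigcup_n D_n$, all the error terms in (1)--(4) tend to $0$ as $n \to \infty$, so in $\A_{\infty}$ one obtains $(\varphi(xy) - \varphi(x)\varphi(y))a = 0$, $\varphi(1)a = a$, $[\varphi(x),a] = 0$ and $(\bar{\alpha}_g(\varphi(x)) - \varphi(x))a = 0$ for all $g \in S$. Since $\varphi$ and multiplication are continuous and the chosen sets are dense, these identities persist for all $a \in \A$, all $x,y \in \D$ and all $g \in S$; in particular $\varphi$ is a morphism from $\D$ to $\A_{\infty}$ relative to $\A$. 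Finally, the set of $g \in G$ with $\bar{\alpha}_g(\varphi(x))a = \varphi(x)a$ for all $a \in \A$ and $x \in \D$ is a subgroup of $G$: it is closed under products by a direct computation using that $\alpha$ is an action, and closed under inverses by applying $\bar{\alpha}_{g^{-1}}$ to the defining identity and using that $\alpha_{g^{-1}}$ is a bijection of $\A$. As this subgroup contains the generating set $S$, it equals $G$. Then Lemma~\ref{Lemma:before-HW-2.4-variant} applies and shows that $\A \rtimes_{\alpha} G$ is $\D$-absorbing.

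The argument is essentially a routine reindexation modeled on \cite[Lemma 2.4]{HW}; the only points requiring care are the descent from $\A_{\infty}$-norm estimates to $\A$-norm estimates for a single representative sequence, which is where nuclearity of $\D$ and the Choi--Effros lifting theorem enter, and the final bootstrap from the generating set $S$ to all of $G$.
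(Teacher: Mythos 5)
Your argument is correct and follows exactly the strategy the paper indicates (the paper omits the details, saying only that one shows the hypotheses imply those of Lemma~\ref{Lemma:before-HW-2.4-variant} by a routine modification of \cite[Lemma 2.4]{HW}): a diagonal reindexation using Choi--Effros lifting of the c.p.c.\ maps $\varphi_n$, followed by the observation that the set of $g$ satisfying the exact invariance condition is a subgroup containing $S$. The only point worth stating explicitly is that nuclearity of $\D$ (hence liftability of c.p.c.\ maps into $\A_\infty$) is what justifies passing to representative sequences, which you do note.
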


\begin{Thm}
\label{Thm:permanence-auto}
Let $\D$ be a strongly self-absorbing $C^*$-algebra. Let $\A$ be a separable $\D$-absorbing $C^*$-algebra and $\alpha \in \aut(\A)$
be an automorphism with the Rokhlin property, then  $\A \rtimes_{\alpha} \Z$ is $\D$-absorbing as well.
\end{Thm}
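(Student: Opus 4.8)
The plan is to apply Lemma~\ref{Lemma:HW-2.4-variant} with $G = \Z$ and $S = \{1\}$, so it suffices to produce, for each $\eps > 0$ and each finite set of contractions $A_0 \subseteq \A$, $D_0 \subseteq \D$, a c.p.c.\ map $\varphi: \D \to \A_{\infty}$ satisfying conditions (1)--(4) there (with $G_0 = \{1\}$). Since $\A$ is $\D$-absorbing, by Proposition~\ref{Prop:HRW-4.1-variant} there is a morphism $\psi: \D \to \A_{\infty}$ relative to $\A$ (and, after enlarging via the omitted diagonalization lemma, relative to $\A + E$ for a suitable separable subspace $E$ to be chosen). The issue is that $\psi$ need not be even approximately $\bar\alpha$-invariant, so I cannot use Lemma~\ref{Lemma:before-HW-2.4-variant} directly; the Rokhlin tower is what repairs this.

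The key construction: fix a large $p$, and using Lemma~\ref{Lemma:central-sequence-reformulation} obtain Rokhlin elements $e_{0,0},\dots,e_{0,p-1},e_{1,0},\dots,e_{1,p}$ in $\A_{\infty} \cap (\A + E)'$, where $E$ is taken to contain $\psi(\D)$ (and the $\bar\alpha$-orbits of $\psi(\D)$, $\A$, etc., all of which is separable). The point of using the central-sequence reformulation is that it gives exact commutation and exact tower relations against $\A + E$, which simplifies bookkeeping. Now I would set, roughly,
\[
\varphi(x) = \sum_{r=0}^{1}\sum_{j=0}^{p-1+r} e_{r,j}^{1/2}\, \bar\alpha^{j}(\psi(x))\, e_{r,j}^{1/2},
\]
i.e.\ on the $j$-th rung of each tower I use the $j$-fold shifted copy of $\psi$. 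Because the $e_{r,j}$ are mutually orthogonal and sum to $1$ on $\A + E$, $\varphi$ is c.p.c., it is approximately multiplicative and approximately unital on $A_0$ (inherited from $\psi$ and the near-projection property $\|(e_{r,j}^2 - e_{r,j})a\| < \eps$), and it approximately commutes with $A_0$ since each $e_{r,j}$ does and $\psi(\D) \subseteq \A_{\infty} \cap \A'$.

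The crux is condition (4), approximate $\bar\alpha$-invariance. Applying $\bar\alpha$ to $\varphi(x)$ shifts each summand: $\bar\alpha(e_{r,j}^{1/2}\bar\alpha^j(\psi(x))e_{r,j}^{1/2}) = \bar\alpha(e_{r,j})^{1/2}\bar\alpha^{j+1}(\psi(x))\bar\alpha(e_{r,j})^{1/2}$, and by the tower relations (2)--(3) of Lemma~\ref{Lemma:central-sequence-reformulation}, $\bar\alpha(e_{r,j})a = e_{r,j+1}a$ for $j \le p-2+r$, while $\bar\alpha(e_{0,p-1}+e_{1,p})a = (e_{0,0}+e_{1,0})a$. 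So $\bar\alpha(\varphi(x))a$ and $\varphi(x)a$ differ, after telescoping, only in the contribution of the ``wrapped-around'' top rungs: the term $\bar\alpha^{p}(\psi(x))$ appears where $\psi(x)$ is expected on the bottom rungs $e_{0,0}, e_{1,0}$. Since $e_{0,0} + e_{1,0}$ is a contraction, this error is bounded by roughly $2\|\bar\alpha^{p}(\psi(x)) - \psi(x)\|$ — which need not be small! The resolution, exactly as in \cite{HW}, is to instead interpolate: build $\varphi$ so that on rung $j$ it uses not $\bar\alpha^j(\psi(x))$ but a convex-combination path from $\psi(x)$ back toward $\bar\alpha^{-1}\!$-conjugates, arranged so that consecutive rungs differ by only $O(1/p)$ in the relevant seminorm; concretely one uses $\psi$ unchanged on rung $j$ but weights adjacent rungs so the ``shift defect'' is spread over $p$ steps, making each local discrepancy at most $C/p < \eps$ for $p$ large. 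I expect verifying this telescoping estimate — choosing the interpolation so that (4) holds while (1)--(3) are not disturbed — to be the main technical obstacle, though it is a routine adaptation of \cite[Lemma 2.4]{HW} once the non-unital and central-sequence formalism is in place. Having produced such $\varphi$ for all $\eps, A_0, D_0$, Lemma~\ref{Lemma:HW-2.4-variant} yields that $\A \rtimes_\alpha \Z$ is $\D$-absorbing.
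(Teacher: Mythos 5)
Your overall framework is the same as the paper's: reduce to Lemma~\ref{Lemma:HW-2.4-variant}, take a morphism $\psi$ relative to $\A+E$, distribute shifted copies $\bar{\alpha}^j(\psi(\cdot))$ over the rungs of a Rokhlin tower taken in $\A_\infty\cap(\A+E)'$, and observe that the only obstruction to approximate $\bar{\alpha}$-invariance is the wrap-around defect $\bar{\alpha}^p(\psi(x))$ versus $\psi(x)$ at the bottom rungs. You have correctly located the crux. But your proposed resolution --- ``a convex-combination path from $\psi(x)$ back toward $\bar{\alpha}^{-1}$-conjugates,'' spreading the defect by ``weighting adjacent rungs'' --- is not what \cite{HW} does and would not work: a convex combination of the (orthogonal-image) homomorphisms $\psi$ and $\bar{\alpha}^{-p}\circ\psi$ is not even approximately multiplicative, so condition (1) of Lemma~\ref{Lemma:HW-2.4-variant} would be destroyed. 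Any interpolation scheme that preserves multiplicativity on each rung essentially has to conjugate by (approximate) unitaries, and producing a short path of such unitaries from the identity to one conjugating $\bar{\alpha}^n(\psi(x))$ onto $\psi(x)$ is precisely the nontrivial step.

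This is where the strongly self-absorbing structure of $\D$ enters beyond the mere existence of $\psi$, and it is absent from your sketch. The paper takes \emph{two} morphisms $\iota,\mu$ with commuting images ($\mu$ relative to $\A$ together with the whole $\bar{\alpha}$-orbit of $\iota(\D)$), assembles maps $\rho(x\otimes y)=\iota(x)\mu(y)$ and $\rho'(x\otimes y)=\bar{\alpha}^n(\iota(x))\mu(y)$ on $\D\otimes\D$, and uses a rectifiable path $1=w_0,\dots,w_n=w$ in $U_0(\D\otimes\D)$ implementing the approximately inner half-flip to produce elements $x_j=\rho(w_j)^*\rho'(w_j)$ that act as unitaries against $\A$, satisfy $x_0a=a$, $\|x_j-x_{j+1}\|\le 2L/n$, and conjugate $\bar{\alpha}^n(\iota(x))$ approximately onto $\iota(x)$ at $j=n$ (this uses $K_1$-injectivity of $\D$, via \cite{winter-ssa-Z-stable}, to get $w\in U_0$ and hence a finite path length $L$, which is what makes the per-rung discrepancy $O(L/n)$). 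The rung-$j$ term is then $e_{r,j}\,\bar{\alpha}^{j-n}(x_j)\bar{\alpha}^j(\iota(x))\bar{\alpha}^{j-n}(x_j^*)$, which is exactly multiplicative against $\A$ on each rung while telescoping correctly at the top. Without this construction (or an equivalent one), your proof has a genuine gap at its central step.
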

\begin{proof}

Fix a finite set $K$ in the unit ball of $\D$ and fix $\eps>0$.

We define c.p.c. maps $\iota,\mu:\D \to \A_{\infty} \cap \A '$ such that $\iota$ is a morphism relative to $\A$, and $\mu$ is a morphism relative to $\A + \mathrm{span}(\bigcup_{n \in \Z} \bar{\alpha}^n(\iota(\D)))$.  

Choose a unitary $w \in U_0(\D \otimes \D)$ such that
$$\|w(x\otimes 1)w^*-1 \otimes x\| < \frac{\eps}{4}$$ for all $x \in K$. 
Thus, $w$ can be connected to $1$ via a rectifiable path.
Let $L$ be the length of such a path. Choose $n$ such that
$L\|x\|/n <\eps/8$ for all $x \in F$.

Define c.p. maps 
$$
\rho,\rho':\D \otimes \D \to
\A_{\infty} \cap \A' 
$$ 
by
$$
\rho(x \otimes y) = \iota (x) \mu(y), \;\; \rho'(x\otimes y) =
\bar{\alpha}^n(\iota(x))\mu(y) \, .
$$ 
Since the image of $\mu$ commutes with the image of $\iota$ and its images under iterates of $\bar{\alpha}$, those maps are well defined. We note that $\rho(1)a = a$ and $(\rho(xy) - \rho(x)\rho(y))a =0$ for all $a \in \A$ and $x,y \in \D \otimes \D$, and the same holds for $\rho'$ as well. This evidently holds for elementary tensors in $\D \otimes \D$, and by linearity and continuity holds for all $x,y \in \D \otimes \D$.

Pick unitaries $1=w_0,w_1,\ldots,w_n = w \in
U_0(\D \otimes \D)$ such that $\|w_j-w_{j+1}\| \leq L/n$ for
$j=0,\ldots,n-1$. Now, for $k=0,\ldots,d$, let 
$$
x_j = \rho(w_j)^*\rho'(w_j);
$$ 
the $x_j$'s behave like unitaries when multiplied by elements from $\A$, i.e. we have
$$
x_jx_j^*a = x_j^*x_ja = a
$$
for all $a \in \A$. Furthermore, $x_0a = a$ for all $a \in \A$. 
Note also that
$$\|x_j - x_{j+1}\| \leq \frac{2L}{n}$$ for $j=0,\ldots,n-1$, and  
that $$\|\left ( x_n \bar{\alpha}^n(\iota(x)) x_n^{*} - \iota(x) \right ) a\| <
\frac{\eps}{2}$$ for all $x\in K$ and all $a \in \A$ with norm at most 1.

Likewise, pick unitaries $1=w'_0,w'_1,\ldots,w'_{n+1} = w' \in
U_0(\D \otimes \D)$ such that $\|w_j-w_{j+1}\| \leq L/{n+1}$ for
$j=0,\ldots,n$, and let 
$$
y_j = \rho_k(w_j')^*\rho'(w_j').
$$
The $y_j$'s satisfy the analogous properties to those of $x_j$'s, where $n$ is replaced by $n+1$.

Let $E$ be the (separable) $C^*$-subalgebra of $\A_\infty \cap \A'$ generated by $\bar{\alpha}^j(\iota(\D))$ and $\bar{\alpha}^j(\mu(\D))$ for all $j$.
Let $\{e_{0,0},\ldots,e_{0,n-1}, e_{1,0},\ldots,e_{1,n} \} \in \A_{\infty} \cap (\A+E) '$ 
be Rokhlin elements in $\A_{\infty} \cap (\A+E) '$  as  in Lemma \ref{Lemma:central-sequence-reformulation}.

Now, define $\theta:\D \to \A_{\infty} \cap \A'$ by 
$$
\theta (x)= \sum_{j=0}^{n-1}e_{0,j}\bar{\alpha}^{j-n}(x_j)\bar{\alpha}^j
(\iota(x))\bar{\alpha}^{j-n}(x_j^{*}) + 
\sum_{j=0}^{n}e_{1,j}\bar{\alpha}^{j-n}(y_j)\bar{\alpha}^j
(\iota(x))\bar{\alpha}^{j-n}(y_j^{*}).
$$

We check that
\begin{enumerate}
\item $(\theta(xy)-\theta(x)\theta(y))a = 0$ for all $x,y \in \D$, $a \in \A$,
\item $\theta(1)a = a$ for all $a \in \A$, and
\item $\|(\bar{\alpha}(\theta(x)) - \theta(x)) a\|<\eps$ for all $x \in K$. 
\end{enumerate}
Thus, $\theta$ satisfies the conditions of Lemma \ref{Lemma:HW-2.4-variant}, so $\A \rtimes_{\alpha}\Z$ is $\D$-absorbing.
\end{proof}

\section{Endomorphisms}

We now consider the case of endomorphisms. 

\begin{Def} \label{Def:Rokhlin-endo}
Let $\A$ be a $C^*$-algebra, and let $\alpha: \A \to \A$ be an endomorphism. We say that $\alpha$ has the \emph{Rokhlin property} if for any positive integer $p$, any finite set $F \subset \alpha^p(\A)$, 
and any $\eps>0$, there are mutually orthogonal positive contractions  
$e_{0,0},\ldots,e_{0,p-1}, e_{1,0},\ldots,e_{1,p}$ such that
\begin{enumerate}
\item $\left\|\left ( \displaystyle \sum_{r=0}^1\sum_{j=0}^{p-1+r} e_{r,j} \right ) a - a \right\|<\eps$ for all $a \in F$,
\item $\left\|[e_{r,j},a]\right\| < \eps$ for all $r,j$ and $a \in F$,
\item $\left\|\alpha(e_{r,j})a - e_{r,j+1}a\right\|<\eps$ for all $a\in F$, $r=0,1$, and $j=0,1,\ldots,p-2+r$,
\item  $\left\|\alpha(e_{0,p-1} + e_{1,p})a - (e_{0,0} + e_{1,0})a\right\|<\eps$ for all $a \in F$.
\end{enumerate} 
\end{Def}
This definition is (formally) weaker than \cite[Definition 4.1]{rordam-95}. We do not know whether they coincide when $A$ is unital (in \cite[Definition 4.1]{rordam-95} $A$ is assumed to be unital).
One can define higher Rokhlin dimension in a similar way, however we shall not pursue it further here.

Examples of endomorphisms satisfying the Rokhlin property (already in R{\o}rdam's sense) play an important role in the theory. It was shown in \cite[Corollary 4.6]{rordam-95} that if $G_0,G_1$ are countable abelian groups with $G_1$ torsion free, and $g_0 \in G_0$ then there is a simple unital AF algebra $A$ and an endomorphism $\alpha$ of $A$ satisfying the Rokhlin property such that $A \rtimes_{\alpha} \N$ is a unital Kirchberg algebra for which $(K_0(A \rtimes_{\alpha} \N),[1],(K_1(A \rtimes_{\alpha} \N)) \cong (G_0,g_0,G_1)$. For a more specific example, if we let $A \cong M_n \otimes M_n \otimes \cdots$, and let $e \in M_n$ be a minimal projection, then the endomorphism $\alpha : A \to A$ given by $\alpha(a_1 \otimes a_2 \otimes a_3 \otimes \cdots) = e \otimes a_1 \otimes a_2 \otimes a_3 \otimes \cdots$ is the original endomorphism considered in \cite{Cuntz}, which satisfies $A \rtimes_{\alpha} \N \cong \Oh_n$. The bilateral tensor shift on the infinite tensor power of $M_n$ satisfies the Rokhlin property (see \cite{BSKR} for the case of $n$ even, and \cite{kishimoto-odd} for the odd case). From this it follows that the endomorphism $\alpha$ described above has the Rokhlin property in our sense as well (see \cite[Example 4.3]{rordam-95}). To see that, denote by $\sigma$ the bilateral shift automorphism on $\bigotimes_{-\infty}^{\infty}M_n$. We think of $\A$ as the subalgebra $1^{\otimes \infty} \otimes \bigotimes_{0}^{\infty}M_n$ of the two-sided tensor power, and observe that $\alpha (a) = (e \otimes 1^{\otimes \infty})\cdot \sigma(a)$ for all $a \in \A$. Fix $\eps>0$, $p\in \N$ and $F \in \alpha^p(\A)$. Let $(p_{r,j})_{r=0,1 \, ; \, j=0,1,...,p-1+r}$ be Rokhlin projections for the automorphisms $\sigma$ with respect to the set $F$ and the given $\eps$. By perturbing the projections $p_{r,j}$ if need be, we may assume without loss of generality that they are all contained in a subalgebra of the form $1^{\otimes \infty} \otimes \bigotimes_{-m}^m M_n  \otimes 1^{\otimes \infty}$.  We may also replace the elements $p_{r,j}$ by $\sigma^{N}(p_{r,j})$ for all sufficiently large $N$ and they will still satisfy the same Rokhin conditions (the projections will still almost commute with elements of $F$, since  for any $x,y$ we have $\lim_{k \to \infty} \|[x,\sigma^k(y)]\| = 0$). In particular, we may assume without loss of generality that the projections $p_{r,j}$ lie in $1^{\otimes (p+1)} \otimes \bigotimes_{p+1}^{\infty}M_n$. Those projections now satisfy the conditions of Definition~\ref{Def:Rokhlin-endo} for the endomorphism $\alpha$.

If $\alpha:\A \to \A$ is an endomorphism, we define $\Alim$ to be the inductive limit of the stationary inductive system $\A \overset{\alpha}{\to} \A \overset{\alpha}{\to} \cdots$ (this is sometimes denoted in the literature by $\A_{\infty}$, however we already used this notation for the sequence algebra). The endomorphism $\alpha$ induces an automorphism of $\Alim$, given by the downwards diagonal arrows in the following commuting diagram.
$$
\xymatrix{
\A \ar[r]^{\alpha}  \ar[dr]^{\alpha} & \A \ar[r]^{\alpha}\ar@{=}[d] \ar[dr]^{\alpha}  & \A \ar[r]^{\alpha}\ar@{=}[d] \ar[dr]^{\alpha} & \cdots \\
 & \A \ar[r]^{\alpha} & \A \ar[r]^{\alpha} & \cdots
}
$$
We denote the resulting automorphism by $\alphalim$. We use this setup to pass from endomorphisms to the case of automorphisms in the non-unital setting, of Theorem~\ref{Thm:permanence-auto}. In the examples we describe above, permanence of $D$-absorption can be read off $K$-theory data, and in that sense our results give nothing new; however this method provides a direct and different way of obtaining it.

\begin{Prop}\label{Prop:endo} Let $\alpha: \A \to \A$ be an endomorphism satisfying the Rokhlin property.
\begin{enumerate}
\item \label{1} The automorphism $\alphalim$ has the Rokhlin property as an automorphism of $\Alim$.
\item \label{2} If $D$ is a strongly self-absorbing $C^*$-algebra and $A$ is a separable unital $D$-absorbing $C^*$-algebra  then $A\rtimes_{\alpha}\N$ is $D$-absorbing as well.
\end{enumerate}
\end{Prop}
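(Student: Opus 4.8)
The plan is to prove part~(1) by a direct argument inside the inductive limit $\Alim$, and then to obtain part~(2) from part~(1), Theorem~\ref{Thm:permanence-auto}, and Stacey's realization of $\A\rtimes_{\alpha}\N$ as a hereditary subalgebra of $\Alim\rtimes_{\alphalim}\Z$.

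For part~(1), write $\phi_n\colon\A\to\Alim$ for the canonical map from the $n$-th copy, so that $\phi_n=\phi_{n+1}\circ\alpha$, the subalgebras $\phi_n(\A)$ form an increasing sequence with dense union, and $\alphalim\circ\phi_n=\phi_n\circ\alpha$ for every $n$. Fix a positive integer $p$, a finite set $F\subseteq\Alim$ and $\eps>0$. By density of $\bigcup_n\phi_n(\A)$, choose $N$ and a finite set $F_0\subseteq\A$ such that every element of $F$ is within $\eps/4$ of $\phi_N(F_0)$. The crucial observation is that $\phi_N(\A)=\phi_{N+p}(\alpha^p(\A))$, so we may rewrite $\phi_N(F_0)=\phi_{N+p}(\alpha^p(F_0))$ with $\alpha^p(F_0)\subseteq\alpha^p(\A)$; this is exactly what is needed to feed the set into Definition~\ref{Def:Rokhlin-endo}, whose hypothesis demands the finite set to lie in $\alpha^p(\A)$. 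Applying the Rokhlin property of $\alpha$ to $p$, to $\alpha^p(F_0)$, and to a small enough tolerance yields mutually orthogonal positive contractions $\tilde e_{0,0},\ldots,\tilde e_{1,p}\in\A$ satisfying conditions (1)--(4) of Definition~\ref{Def:Rokhlin-endo}. Put $e_{r,j}=\phi_{N+p}(\tilde e_{r,j})$. These are mutually orthogonal positive contractions in $\Alim$, and since $\phi_{N+p}$ is a norm-decreasing $*$-homomorphism intertwining $\alpha$ with $\alphalim$, each of the four conditions for the $\tilde e_{r,j}$ passes to the corresponding condition of Definition~\ref{nonunital Rokhlin} for the $e_{r,j}$ and $F$, up to the error incurred by replacing $F$ with $\phi_N(F_0)$; a routine estimate with the tolerances closes this. (Here we use that $\|\sum_{r,j}e_{r,j}\|\le 1$ and each $e_{r,j}$ is a contraction, so all perturbation errors are controlled by $\eps/4$.)

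For part~(2), first $\Alim$ is separable, being a countable inductive limit of separable $C^*$-algebras, and it is $\D$-absorbing: $\D$-absorption passes to quotients and to inductive limits with injective connecting maps, hence to arbitrary inductive limits, and $\Alim$ is the inductive limit of the stationary system $\A\overset{\alpha}{\to}\A\overset{\alpha}{\to}\cdots$ whose stages are all isomorphic to the $\D$-absorbing algebra $\A$ (see \cite{TW,HRW}). By part~(1), $\alphalim$ is an automorphism of $\Alim$ with the Rokhlin property, so Theorem~\ref{Thm:permanence-auto} shows that $\Alim\rtimes_{\alphalim}\Z$ is $\D$-absorbing. By Stacey's construction (\cite{Sta93}; see also \cite{Cuntz,rordam-95}), $\A\rtimes_{\alpha}\N$ is isomorphic to a hereditary subalgebra of $\Alim\rtimes_{\alphalim}\Z$ --- in fact to the corner $p(\Alim\rtimes_{\alphalim}\Z)p$, where $p\in\Alim$ is the image of $1_{\A}$ under the zeroth embedding. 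Since $\D$-absorption passes to hereditary subalgebras of separable $C^*$-algebras (\cite{TW,HRW}), we conclude that $\A\rtimes_{\alpha}\N$ is $\D$-absorbing.

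The index bookkeeping in part~(1) --- passing from level $N$ to level $N+p$ so that the finite set lands in $\alpha^p(\A)$ --- is the one genuinely new point, since it is precisely what reconciles the (a priori weaker) endomorphism Rokhlin property with the automorphism Rokhlin property of Definition~\ref{nonunital Rokhlin}. The only other place requiring care is confirming the precise form of Stacey's corner realization and that the invoked permanence properties of $\D$-absorption (inductive limits, quotients, hereditary subalgebras) hold in the required, possibly non-unital, generality; both are by now standard.
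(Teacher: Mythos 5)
Your proof is correct and follows essentially the same route as the paper: part (1) by pushing Rokhlin elements for $\alpha$ through the canonical maps into $\Alim$ (with the index shift $\gamma_N(\A)=\gamma_{N+p}(\alpha^p(\A))$ so that the finite set lands in $\alpha^p(\A)$, a point the paper glosses over), and part (2) via Stacey's corner realization together with permanence of $\D$-absorption under inductive limits and hereditary subalgebras and Theorem~\ref{Thm:permanence-auto}. You in fact supply details the paper leaves implicit (separability and $\D$-absorption of $\Alim$), and the argument is sound.
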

\begin{proof}
For \eqref{1}, let $\gamma_n : \A \to \Alim$ be the canonical homomorphisms as in the definition of an inductive limit, i.e. $\gamma_{n+1} \circ \alpha = \gamma_n$ and $\Alim = \overline{\bigcup_n \gamma_n(\A)}$, where the union is increasing. Let $F \subseteq \Alim$ be a given finite set, let $\eps>0$ and let $p>0$ be a given integer. We may assume without loss of generality that  $F \subseteq \bigcup_n \gamma_n(\A)$. Fix $N>p$, and choose a finite set $F_N \subseteq \A$ such that $\gamma_{N}(F_N) = F$. Find elements $e_{r,i}$ as in Definition~\ref{Def:Rokhlin-endo}, and one now checks that the elements $\gamma_N(e_{r,i})$ satisfy the conditions of Definition \ref{nonunital Rokhlin} with respect to the given $(F,\eps)$. 

For \eqref{2}, by \cite[Proposition 3.3]{Sta93} there exists a projection $p\in M(\Alim\rtimes_{\alphalim}\Z)$ such that $A\rtimes_{\alpha}\N=p(\Alim\rtimes_{\alphalim}\Z) p$.
Thus by \cite[Corollary 3.1]{TW}, $A\rtimes_{\alpha}\N$ is $D$-absorbing if $\Alim\rtimes_{\alphalim}\Z$ is.  Theorem~\ref{Thm:permanence-auto} and \eqref{1} now give the result. \end{proof}

\end{document}